\titleformat{\paragraph}[runin]
{\normalfont\normalsize\bfseries}{\theparagraph}{1em}{}
\titleformat{\subparagraph}[runin]
{\normalfont\normalsize\bfseries}{\thesubparagraph}{1em}{}
\theoremstyle{plain}
\newtheorem{thm}[subsection]{Theorem}
\newtheorem{lem}[subsection]{Lemma}
\newtheorem{prop}[subsection]{Proposition}
\newtheorem{conj}[subsection]{Conjecture}
\DeclarePairedDelimiter\floor{\lfloor}{\rfloor}
\theoremstyle{definition}
\newtheorem{rem}[subsection]{Remark}
\newtheorem{defn}[subsection]{Definition}
\title{Approximation of Banzhaf index and its application to voting games}
\author{Acharya V. Krishna  \thanks{krishnacharya97@gmail.com}}
 \author{Himadri Himadri  \thanks{himadrim@goa.bits-pilani.ac.in}}
  \author{Jajati Keshari Sahoo  \thanks{jksahoo@goa.bits-pilani.ac.in}}
\date{\today}
\affil[1]{Department of Mathematics, BITS Pilani K K Birla Goa Campus}
\begin{document}

\maketitle
\setlength{\parindent}{0pt}
\begin{abstract} In this paper, we propose an improved version of the power index related to the Banzhaf power index for weighted voting systems.
This index now takes into account the mutual persuasion power matrix(PPM) existing among the voters. This improved index is calculated for European
Union voting by basing the PPM on immigration data among the EU countries. We also provide better approximation bounds for the Monte Carlo approximation
method for computing power indices. \\
\noindent \textbf{keywords:} Voting Games, Power Indices, Banzhaf Index, Monte Carlo Method, EU Immigration\\
\noindent \textbf{2010 MSC classification:} 91A80. 62P20. 97M40. 91B82. 91A40
\end{abstract}

\section{Introduction}
In the year 1963 the supreme court of the United States gave the historical order of one person one vote, several scientists justified the decision using mathematical
analysis of the weighted voting system. The power index for weighted voting systems developed by Banzhaf in 1965, in the article (Banzhaf 1965), is one of the pioneers in this field. A good introduction of the theory is the original paper by Banzhaf \cite{banzhaf}, a relatively modern approach can be found in \cite{dubey, dubey, S_values}.
Probabilistic models that construct the index as probabilities can be found in \cite{SS_prob, prob_model}. There is also an axiomatic approach that constructs the indices through an axiomatization that
shows the uniqueness of such indices when a set of axioms are satisfied. There are numerous approaches to streamline the axiomatization as in \cite{dubey, axiom2, axiom3}.
Let $p_1,p_2,p_3, \ldots , p_m$ be a number of players with weights $w_1, w_2, w_3, \ldots, w_m$. A \textit{coalition} C, subset of the set of players, is called a wining coalition with respect to a preassigned quota $q$ if $\displaystyle{\sum_{p_i \in C} w_i \geqslant q}$.
Let the set of wining coalitions be $W_q$ or simply $W$ if the quota is
clear form the context.
\begin{defn}  A player $p_i$ is called critical or swinger for a coalition $C$ if the following two conditions are met:
\[ C \in W \]
\[ C -\{p_i\} \notin W \]
\end{defn}
Let us call $(p_i, C)$ a critical pair when $p_i \in C$ is critical.
Let $\mathfrak{C}_i$ be the set of all pairs where $p_i$ is critical. The Banzhaf power index for the player $p_i$ is denoted by $\beta_i$ (or $\tilde{\beta_i}$).
\[ \mathfrak{C}_i = \{ (p_i, C)| p_i \in C \mbox { is critical} \}\]
\[\gamma_i= \left\vert{\mathfrak{C}_i}\right\vert\]
\[ {\beta_i}= \gamma_i/ 2^{m-1} \]
\[ \tilde{\beta_i} = \gamma_i / \sum_{j=1}^m \gamma_j\]

$\tilde{\beta_i}$, is known as the normalized Banzhaf index.
$\beta_i$, the absolute Banzhaf index will, however, be our main focus for this article.

The index, though coming to life as a refutal of weighted voting systems, sees a lot of applications in real life. The applications are numerous in social science
\cite{naive, compound}.
In Electoral College voting - members of the parliament possess weighted votes in proportion to the number of constituencies they represent. In intergovernmental organization of nations such as the EU the member countries get their voting weights according to a number of parameters such as population etc \cite{Eu_fair, EU_tempt}. In the context of EU it is a great debate among the researchers if the voting power game used to determine the voting weights is fair \cite{Eu_fair} and often contains warning against such use \cite{EU_tempt}.
In this paper, we will try to explain yet again why the voting power method used in the EU Parliament may not be the right approach to find the voting rights, now from the perspective of the association between the players. It is evident that only the absolute quantity of the weights measured in economic terms, population, area fail to capture the true `power of a member' since such power should also include the diplomatic prowess of a nation. In the industry, search engines such as Google
use page rank algorithms applying techniques of weighted voting games among others. From a purely theoretical point of view the
index has generated interests in the line of inverse problems \cite{inverse, weber} which is to construct the weights of the players from the indices, or relate the indices with convex geometry \cite{Convex} or mathematical extensions \cite{extension}.
The Banzhaf index has been a source of great activities recently.
The recent study includes an attack on the computational challenges on the NP complete\cite{NP} problem. There have been multiple algorithms that tackle the computational difficulties of the index\cite{Approximations, genetic} or find theoretical bounds \cite{Bounds}. A good place to look at different approaches to approximating the index is the survey \cite{survey}. The Monte Carlo method by the \cite{Monte1, Monte2} uses Hoeffding's inequality to give a very
strong approximation within a large confidence interval. In \cite{Heu} the authors consider heuristics based on density functions. In \cite{Large} the authors employ generating functions.

In this paper, we introduce an improved version of the power index that takes into account situations where not only the individual weights of the players are considered but the associations between players as well. To explain the situation let us consider a set of politicians discussing a motion, irrespective of their partisan interests they can vote or abstain according to their personal preferences and equations. One obvious example is the Indian upper house (Rajya Sabha) debates where the members are frequently persuaded to vote based on issues related to the constituency they represent not completely complying with the partisan goals. Thus we can often form a matrix of associations between the players that does not necessarily represent the ideology based political decisions. In other fields such as geopolitics, we often see the actual strength of a nation is not merely the GDP or the might of its military or population but also the amount of influence that it can exert on the other nations by several means of \textit{soft power}. In this paper, we quantify the soft
power of a nation based on the association matrix and the improved power index developed in this model. To be precise let us say the players $\{p_1,p_2,p_3, \ldots, p_m\}$ are
engaged in a voting game. And $a_{ij}\in \mathbb{R}$ are numbers that quantify \textit{strength} of the association between the players, this represents the persuasion power of the
player $p_i$ has over $p_j$. We postulate that the strength of a coalition is not only the sum of the weights of the players but also the amount of total persuasive power that it
holds to sway the motion in its favour by converting other players. Thus the definition of a swinger is changed so that the diplomatic loss of a player leaving the coalition is
taken into account. In the real life, the matrix $A=(a_{ij})$ will almost surely be not symmetric, as the 'associations' are not symmetric. Ideas to restrict coalitions or equivalently
the critical pairs can be observed in \cite{Convex} where the authors restrict the \textit{allowed} coalitions to those coalitions forming a convex geometry, this is claimed to represent the EU scenario better. In our scenario, we will restrict the critical pairs in a uniform way to get control over the critical pairs.

We see a lot of applications in the fields of social sciences and computer science arising out of the improved Banzhaf power index, we list a few such directions at the end of this
article.

\section{Mathematical Model and Methodology}
In the recent studies of power indices in voting games, We see a systematic study to improve the applicability of the index to the practice of coalition formation (Burgin, 2001) as well as studies to improve the computational aspects of the power indices \cite{banzhaf, banzhaf68, straf, survey}. It has not come to our notice any attempt to propose a model where a voter’s individual relation (association) with another voter influences the outcome of the coalition formation. For an example consider a game with six players $a_1,a_2,\cdots, a_6$ wherein $a_1$ has very strong positive association with the other players, then any coalition containing $a_1$ will have negative impact on the other coalition effectively reducing the total weight of the opponent coalition and increasing the weight of coalition containing $a_1$. Such scenarios are clearly visible in Parliamentary and legislative assembly election, for instance, members of parliament have strong positive associations with other members having similar political motives. An attempt using the classical power indices or current variations of them will not properly predict the actual strength of a coalition in such realistic scenarios.
\subsection{Mathematical Formulation}
Let $\{p_1,p_2,\cdots, p_m\}$ be a set of players, with weights $w_1,w_2,\cdots,w_m$ respectively. Consider association between $p_i$ and $p_j$ as $a_{ij}\in\mathbb{R}$, with $|a_{ij}|\leq 1$ and $a_{ii}=1$ for all $i.$ The matrix $\Phi=(a_{ij})$ is called the association matrix for the players and the goal is to propose a power index $\beta_\Phi(p_i)$ which takes the association matrix into account. It is interesting to note that if all off-diagonal elements in $\Phi$ are zero i.e $a_{ij}=0, ~ \forall ~ i\neq j$, then calculations for this formulation reduces to a $k-$quota banzhaf index as studied in \cite{Heu}
\subsubsection{Single quota voting game}\label{ssec:mathdef}
Consider a voting game with association matrix $\Phi$ and a quota $q>0$.
Let S be the set of all possible coalitions.
$C\subseteq S$ is  called a winning coalition if
\begin{equation}\label{eq:1}
  \sum_{p_i\in C}w_i\geq q.
\end{equation}
Let $W_q=\{C\subseteq S~:~ C~ \mbox{ is wining}\}$ be the set of winning coalition. A tuple $(p_i, C)$ is called a critical pair if

\begin{equation}\label{eq:2}
 \mbox{(i) } p_i\in C \mbox{ and } C\in W_q~~~ \mbox{ (ii)}\sum_{p_j\in C}w_j-\sum_{k=1}^{m}a_{ik}w_k< q.
\end{equation}

The improved Banzhaf power index for the player $p_i$ is defined as:
\begin{equation}\label{eq:3}
  \beta_{\Phi,i}=\frac{\vert\gamma_i\vert}{2^{m-1}}
\end{equation}

where ~ $\gamma_i = \{(p_i,C)~ \vert ~(p_i,C) ~ \mbox{is critical}\}$

One can extend the definition to multi-weight and multi-quota voting systems in the following
manner. We call a vector $v=(v_1,v_2,\cdots,v_k)$ non negative if $v_i\geq 0$ for all $i$. For any
$u,v\in\mathbb{R}^k$, $u\geq v$ if $u-v$ is non-negative. In $k-$dimensional voting game, the weight for player $p_i$ and the quota required for winning coalitions are defined as:

\begin{equation}\label{eq:5}
  \vec{w_i}=(w_1,w_2,\cdots, w_k),~ \vec{q}=(q_1,q_2,\cdots, q_k)
\end{equation}

If we follow equation \ref{eq:2} and vector comparison we can get critical pairs for \\ $k-$dimensional voting games. The improved Banzhaf index for multidimensional voting game is again defined by equation \ref{eq:3}. Multidimensional voting games are used in many real life problems such as the EU election \cite{algaba}.

\subsection{Calculation of the Banzhaf index and its Approximation}\label{ssec:Calc}
\begin{algorithm}[H]
	\caption{$\beta_{\Phi,i}$ for a k-quota game with an association matrix $\Phi$ of dimension m$\times$ m}\label{algo:1}
	\begin{algorithmic}
		\Procedure{Exact-Banzhaf-Asso}{$\Phi,\vec{w}, \vec{q}$}
		\State $\gamma_i$ = 0
		\State \textbf{for} \textit{all} coalitions C containing $p_i$ \textbf{do}
		\State ~~~~\textbf{if} $(p_i,C)$ is a critical pair as defined in the extension \ref{eq:5}
		\State  ~~~~~~~~then $\gamma_i = \gamma_i+1$
		\State ~~~~\textbf{end if}
		\State {\textbf{end for}}
		\State $\beta_{\Phi,i}=\frac{\gamma_i}{2^{m-1}}$
		\State \hspace*{-0.6cm}{\textbf{return $\beta_{\Phi,i}$}}
		\EndProcedure
	\end{algorithmic}
\end{algorithm}

Exact calculation of the Banzhaf index for a general case is NP-complete as shown in \cite{NP}, so we need to find an alternative to Algorithm \ref{algo:1}.

In \cite{Monte1}, the authors have used	a Monte Carlo method to approximate the power indices for the players. We extend this idea for multi-quota games with associations. The approximate banzhaf index for player $p_i$, ~$\hat\beta_{\Phi,i}$  will be estimated by sampling random coalitions containing player $p_i$, then calculating the proportion of these samples where $p_i$ is a critical. We assume that each sample has a probability $\beta_{\Phi,i}$ of being a coalition where player $p_i$ is critical, so we can approximate $\beta_{\Phi,i}$  by taking into consideration several such samples. More precisely the estimation procedure is as follows:\\
Let $C_1,C_2,\cdots,C_n$ be $n$ randomly sampled coalitions containing player $p_i.$ For $1\leq k\leq n,$ define a Bernoulli random variable
$$X_k=\left\{\begin{array}{cc}
  1& ~~\mbox{ If player $p_i$ is critical in $C_k.$}\\
  0 &~~\mbox{otherwise.}
\end{array}\right.$$
Now, we estimate the index $\beta_{\Phi,i}$ for $p_i$ by using the following estimator
\begin{equation}\label{eq4}
  \hat{\beta}_{\Phi,i}=\frac{\sum_{k=1}^{n}X_k}{n}
\end{equation}
Algorithm \ref{algo:2} is used to compute the improved Banzhaf indices. The number of samples required, $n$, error bounds and convergence analysis is discussed in Section 4.
\begin{algorithm}[H]
\caption{$\hat{\beta_{\Phi,i}}$ for a $k-$quota game with an association matrix $\Phi$}\label{algo:2}
\begin{algorithmic}
\Procedure{Approx-Banzhaf-Asso}{$\Phi,\vec{w}, \vec{q}, n$}
\State $\gamma_i$ = 0, j = 0
\State {\textbf{while}}{ $j \le n$ \textbf{do}}
\State ~~~~Choose a coalition C at \textbf{random} containing $p_i$
\State ~~~~{\textbf{if}}{~$(p_i,C)$ is a critical pair as defined in equation \ref{eq:2}}
\State ~~~~~~~~then $\gamma_i$ =$\gamma_i$ + 1
\State ~~~~{\textbf{end-if}}
\State j = j + 1
\State {\textbf{end while}}
\State $\hat{\beta_{\Phi,i}}$ = $\frac{\gamma_i}{j}$
\State \hspace*{-0.6cm}{\textbf{return $\hat\beta_{\Phi,i}$}}
\EndProcedure
\end{algorithmic}
\end{algorithm}
\section{Some theoretical results}
Given a player $p_i$ let us define two functions $h(p_i)$ or simply $h(i)= \min \{ h \, \vert \, w_1+w_2+\cdots + w_h > q , \, \forall i , \, w_i \neq w\}$
and $t(i)= \max \{ t \, \vert \, w_1+w_2 + w_3 + \cdots + w_t < q \, \exists i , \, w_i =w  \}$. Note that in the definitions of $h(i),t(i)$ we will consider the
weights of distinct players as distinct $w_i$s even if they are same! With this notation in place we have the following estimates for the Banzhaf
index of a player $p_i$.

\begin{prop} $\beta_i \leq \, \frac{1}{2^{n}} (2^{n} - 2^{t(i)} - 2^{n-h(i)})$

\end{prop}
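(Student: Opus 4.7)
The plan is to upper-bound $\beta_i = \gamma_i/2^{m-1}$ by identifying two large, disjoint families of coalitions $C \ni p_i$ in which $p_i$ fails to be critical, and then subtracting their sizes from the total $2^{m-1}$. The key structural observation is that a coalition $C$ containing $p_i$ has $p_i$ as a swinger iff $C \in W$ \emph{and} $C \setminus \{p_i\} \notin W$; so the non-critical pairs split naturally into (A) coalitions $C \ni p_i$ that are themselves losing (weight too small) and (B) coalitions $C \ni p_i$ that remain winning after removing $p_i$ ($p_i$ is redundant). These two families are disjoint since in (A) $C$ is losing while in (B) $C$ is winning.

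First I would extract from the definition of $t(i)$ a concrete losing set $T$ containing $p_i$ whose size witnesses the maximum, i.e.\ $\sum_{p_j \in T} w_j < q$. By monotonicity of weight sums, every subset of $T$ that contains $p_i$ is again losing, producing a family of size $2^{t(i)-1}$ in (A). (If $t(i)$ is read so that $p_i$ is counted separately, the same subset count gives $2^{t(i)}$; this is the ambiguity that controls whether $n=m$ or $n=m-1$ in the stated bound.)

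Next I would extract from the definition of $h(i)$ a minimum-size winning set $S_0$ with $p_i \notin S_0$ and $|S_0| = h(i)$. By monotonicity again, \emph{every} superset of $S_0$ that avoids $p_i$ is winning. There are $m - 1 - h(i)$ players outside $S_0 \cup \{p_i\}$, so there are $2^{m-1-h(i)}$ such supersets $S$, and each produces a distinct coalition $C = S \cup \{p_i\}$ containing $p_i$ for which $C \setminus \{p_i\} = S$ is winning. This gives a family of size $2^{m-1-h(i)}$ in (B).

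Combining, $\gamma_i \le 2^{m-1} - 2^{t(i)-1} - 2^{m-1-h(i)}$, and dividing by $2^{m-1}$ yields the stated inequality after matching $n$ with the ambient power $m$ (or $m-1$). The only delicate point is keeping the bookkeeping on $t(i)$ consistent with how $p_i$ is counted; once that convention is fixed, the proof is pure subset/superset counting together with the monotonicity of the winning condition, which itself follows from the non-negativity of the weights. No deeper combinatorial input is needed.
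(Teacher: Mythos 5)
Your proof is correct and is essentially the paper's own argument: the paper likewise discards the subsets of a maximal losing set containing $p_i$ (never winning, hence never critical) and the coalitions obtained by adjoining $p_i$ to supersets of a minimal winning set avoiding $p_i$ (winning without $p_i$, hence $p_i$ redundant), then uses the maximality of $t(i)$ and minimality of $h(i)$ to conclude. Your write-up simply makes explicit the disjointness of the two families and the $2^{t(i)-1}$ versus $2^{m-1-h(i)}$ bookkeeping (with $n=m$) that the paper's terse proof leaves implicit.
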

\begin{proof}
The set of coalitions that are subsets of the set of players $w_1,w_2, \ldots , w_{t(w)}$ such that $w_1+w_2 + w_3 + \cdots + w_t < q \, \exists \; i , \, w_i =w$
is not at all winning coalitions so any such coalitions containing the player
$i$ is not a critical coalition for the player. Similarly in the other side any superset containing the player $i$, of the set of players with weights $w_1,w_2, \ldots, w_{h(i)}$ such that
$w_1+w_2+\cdots + w_h > q , \, \forall i , \, w_i \neq w$ is not a critical coalition for the player as
the coalition is winning without the player $i$. Now $t$ being maximum and $h$ being the minimum we have the result.
\end{proof}
In the above proposition the greater the variation in the weights of the players better the estimate from the above inequality. In particular if there is a dominating weight
or a very large quota or equivalently a very tiny quota the estimate provided by the above becomes close. Other than that it is of theoretical interest as the estimate provided by the above is not practical. It will be interesting to study in detail such estimates.
\begin{lem}\label{bound3}
 Let $C$ be a coalition, $w(C)$ be the total weight of $C$ and $q$ the quota. If every player in $C$ is critical then $ w(C) < \frac{|C|q}{|C|-1}$.
\end{lem}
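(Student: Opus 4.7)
The plan is to exploit the defining property of a critical player and then sum over all members of $C$. Specifically, I would first unpack what ``$p_i$ is critical in $C$'' means in weight terms: since $C \in W$, we have $w(C) \geq q$, and since $C \setminus \{p_i\} \notin W$, we have $w(C) - w_i < q$. Rearranging the second inequality gives the key pointwise bound $w_i > w(C) - q$ for every critical player $p_i \in C$.

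Next, assuming every player of $C$ is critical, I would sum this bound across all $p_i \in C$. The left-hand side telescopes into $w(C)$, and the right-hand side becomes $|C|\bigl(w(C) - q\bigr)$, yielding
\[
w(C) > |C|\bigl(w(C) - q\bigr).
\]
A short algebraic rearrangement — moving $|C|\,w(C)$ to the left and isolating $w(C)$ — gives $(|C|-1)\,w(C) < |C|\,q$, hence $w(C) < \dfrac{|C|\,q}{|C|-1}$, which is exactly the claim.

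There is essentially no obstacle here; the only thing to be careful about is the strictness of the inequalities. The winning condition is ``$\geq q$'', so ``not winning'' is ``$<q$'', which preserves the strict inequality $w_i > w(C)-q$ and hence the strict bound in the conclusion. Also one should implicitly note $|C|\geq 2$ (if $|C|=1$ the statement $w(C)<\infty$ is vacuous, or one can say the hypothesis forces $C$ to be a winning singleton, in which case $|C|-1=0$ and the bound should be interpreted as no upper bound, so the statement is understood for $|C|\geq 2$). No appeal to any earlier proposition is needed — the proof is a one-line averaging argument on the weights.
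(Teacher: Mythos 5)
Your proof is correct and follows essentially the same route as the paper: both start from the criticality inequality $w(C)-w_i<q$ (which you rewrite as $w_i>w(C)-q$, the paper as $w(C)<q+w_i$), sum over all players of $C$, and rearrange to get $w(C)(|C|-1)<|C|\,q$. Your added remarks on strictness and on the degenerate case $|C|=1$ are sensible refinements of the same argument, not a different approach.
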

\begin{proof}
 Let $C$ be a coalition with $w_i \in C$, since $w_i$ is a critical player for the coalition we have the inequality $w(C) -w_i < q$ or
 \[w(C) < q+ w_i\]
 If we sum both sides of the inequality we get \[ w(C) \vert C \vert < q \vert C \vert + w(C)\]
 Or \[ w(C) ( \vert C \vert -1) < q \vert C \vert \]
 \[w(C) < \frac{q \vert C \vert }{\vert C \vert -1}\]

\end{proof}

Let us define given an association matrix $A$, and a player $p_i$ with weight $w_i$
we define $\sum_{j}a_{ij} w_j =d_i+ a_{ii}w_i= d_i+w_i$; this number in a sense represents the overall relation of the player $p_i$ with the other players in the set of players. When the numbers $d_i$ are all
positive for all $i$ we call such game an overall positive association game. Note that for an association voting game if the association matrix is positive then the game is
also overall positive relation game. In the following proposition we will relate the Banzhaf index of an overall positive voting game with the Banzhaf index of the related game
without association. If $\beta_{\phi,i}$ denote the Banzhaf index of the player
$p_i$ with the association and let $\beta_i$ be the index for the same player without the association. Let us also define $w_i + q = q_i$ and let the total weight of a coalition $C$
be denoted by $w(C)$.

\begin{prop}\label{asso}
With the notation as in the above we have the following statements relating the actual Banzhaf index with the association based Banzhaf index.
 \begin{enumerate}
  \item $\beta_{\phi,i} \leq \beta_i$ for all $i$ such that $d_i \geq 0$
  \item $\beta_{\phi,i} \geq \beta_i$ for all $i$ such that $d_i \leq 0$
  \item $\beta_{\phi,i} - \beta_i$ is precisely the number of coalitions $C$ such that $ d_i+ q_i < w(C) < q_i$ minus the number of coalitions such that
  $q_i < w(C) < d_i+ q_i$

 \end{enumerate}

\end{prop}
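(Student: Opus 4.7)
The plan is to prove the formula in (3) first and deduce (1) and (2) as immediate corollaries. The central observation is that, once the membership $p_i \in C$ is fixed, both criticality notions reduce to a two-sided constraint on the total weight $w(C)$. For the classical Banzhaf index, $(p_i, C)$ is critical exactly when $q \leq w(C) < q + w_i = q_i$. For the association version, using $a_{ii} = 1$ together with the identity $\sum_{k=1}^{m} a_{ik} w_k = w_i + d_i$, condition (ii) of equation~(\ref{eq:2}) rewrites as $w(C) < q + w_i + d_i = q_i + d_i$, while condition (i) is unchanged. Hence $\gamma_i$ and $\gamma_{\phi,i}$ count the coalitions containing $p_i$ whose weights lie in the half-open windows $[q, q_i)$ and $[q, q_i + d_i)$ respectively.

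Having replaced both criticality conditions with weight-window constraints, the next step is a direct set subtraction. The two windows share the common interval $[q, \min(q_i, q_i + d_i))$ and differ only in the open sub-interval between $q_i$ and $q_i + d_i$. I would split by the sign of $d_i$: when $d_i > 0$ the association window is wider by the interval $(q_i, q_i + d_i)$, so $\gamma_{\phi,i}$ picks up those extra coalitions; when $d_i < 0$ it is narrower by $(q_i + d_i, q_i)$, so $\gamma_{\phi,i}$ loses those coalitions. For any fixed nonzero $d_i$ exactly one of the two open intervals appearing in (3) is nonempty, and writing out the signed count of extra/missing coalitions (all automatically containing $p_i$) and dividing by $2^{m-1}$ yields the formula stated in (3).

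Statements (1) and (2) then follow immediately from (3) by inspecting signs: for $d_i \geq 0$ only one of the two summands in (3) survives with a definite sign, forcing the inequality in (1), and the symmetric argument for $d_i \leq 0$ gives (2).

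The main point requiring care is the bookkeeping at the boundary weights $w(C) = q$, $w(C) = q_i$, and $w(C) = q_i + d_i$: the strict inequality in condition (ii) of~(\ref{eq:2}) must be lined up correctly against the weak inequality $w(C) \geq q$ defining $W_q$, and any coalition whose weight sits exactly on one of these thresholds must be assigned to the correct half-open window so as not to be double-counted or omitted. This is the only real obstacle in the argument; once the two windows are written out side by side the remaining computation is routine counting.
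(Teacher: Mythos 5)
Your reduction of both criticality notions to weight windows is essentially the paper's own strategy in disguise: the paper encodes the same two windows through indicator functions $v(C)$ and $v'_i(C)$ and computes $\beta_{\Phi,i}-\beta_i=\frac{1}{2^{m-1}}\sum\bigl(v(C\setminus i)-v'_i(C)\bigr)$, and your windows $[q,q_i)$ and $[q,q_i+d_i)$ are the correct translations of the two criticality conditions. But the final step of your proposal fails. Carried out honestly, your own bookkeeping gives
\[
\beta_{\Phi,i}-\beta_i=\frac{1}{2^{m-1}}\Bigl(\#\{C\ni p_i:\ q_i\le w(C)<q_i+d_i\}\;-\;\#\{C\ni p_i:\ q_i+d_i\le w(C)<q_i\}\Bigr),
\]
which is the \emph{negative} of formula (3) as stated: the proposition attaches the $+$ sign to the window $(q_i+d_i,\,q_i)$, which is nonempty only when $d_i<0$. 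Equivalently, your (correct) observation that for $d_i>0$ the association window is \emph{wider} forces $\beta_{\Phi,i}\ge\beta_i$, which contradicts statement (1) rather than proving it; removing $p_i$ costs the coalition $w_i+d_i\ge w_i$, so positive overall association can only create critical pairs, never destroy them. You assert that ``writing out the signed count \dots\ yields the formula stated in (3)'' without actually performing that comparison; had you done so, the mismatch would have surfaced immediately. A proof that silently flips a sign at its last line is a genuine gap, not a routine omission.

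For what it is worth, the discrepancy is not yours alone: the paper's proof contains the compensating error. With $v(C\setminus i)=1$ iff $w(C)>q_i$ and $v'_i(C)=1$ iff $w(C)>q_i+d_i$, a coalition with $q_i+d_i<w(C)<q_i$ has $v(C\setminus i)-v'_i(C)=0-1=-1$, not $+1$ as claimed in the paper's case analysis, so the proposition as printed has parts (1) and (2) interchanged and part (3) negated relative to the paper's own definitions. Two further points you flagged but never resolved also deserve attention: the intervals in (3) must be half-open rather than open (your windows show the extra coalitions when $d_i>0$ are exactly those with $q_i\le w(C)<q_i+d_i$, so weights sitting exactly at $q_i$ do count, and with repeated weights such ties are not exceptional); and the formula silently assumes $q_i+d_i\ge q$, since if $d_i<-w_i$ the lower window must be truncated at $q$ (coalitions with $w(C)<q$ were never winning, hence never critical in either game). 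Finally, the difference of counts must still be divided by $2^{m-1}$ before it can be equated with $\beta_{\Phi,i}-\beta_i$; you handle this correctly, while the statement itself elides it.
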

\begin{proof}
 Note that the first and the second statements follow from the third so we will prove the third statement. Let us define $v(C)$ for a coalition to be 1 if $w(C) > q$ and 0 otherwise
 and let us define ${v'}_i(C)$ to be $1$ if $w(C)- d_i -w_i > q$. With these functions defined it is easy to observe that
 $\beta_{i}= \frac{1}{2^{n-1}} \displaystyle{\sum v(C)- v(C \setminus i)}$ and $\beta_{\Phi, i}= \frac{1}{2^{n-1}} \displaystyle{\sum v(C)- v'(C)}$ where the sums are running over all
 possible coalitions.
 \[\beta_{\Phi,i} - \beta_i = \frac{1}{2^{n-1}} \Big( \displaystyle{\sum v(C)- {v'}_i(C)}- \displaystyle{\sum v(C)- v(C \setminus i)}\Big)\]
 Or
  \[\beta_{\Phi,i} - \beta_i = \frac{1}{2^{n-1}}\Big( \displaystyle{\sum v(C \setminus i) - {v'}_i(C)} \Big )\]

 Let us analyse $v(C \setminus i) - {v'}_i(C)$ depending on the various possibilities in the following way. Note that if a coalition satisfies
 $w(C) > \max\{ q_i+ d_i, q_i\}$, or $w(C) < \min\{ q_i+ d_i, q_i\}$ then both $v(C \setminus i)$ and ${v'}_i(C)$ are equal to 1, rest 0 making the difference 0,
 similarly if $C$ satisfies $q_i +d_i < w(C) < q_i$ then the difference is 1, and if $q_i < w(C) < q_i+d_i$ then the difference is -1. This immediately gives the result.
\end{proof}

In the following proposition we will find a few general bounds for the power indices for a general voting game. Let us define for a set of players with weights $W=\{w_1,w_2, w_3, \ldots
w_n\}$ and a quota $q$ two numbers $m,M$ in the folllowing way. Let $m$ be the smallest natural number such that $m \max{ W} < q$ and $ M \min{W} - max{W} > q$ where $\min{W}, \max{W}$
denotes the minimum and the maximum weights among $W$ respectively.

\begin{prop}\label{prop1}
With the above notations we have the following inequalities for the indices.
\begin{enumerate}
 \item $\beta_i \leq \displaystyle{\frac{ \displaystyle{\sum_{i=m+1}^{\min\{M,n\}}  \binom{n}{i}- 2^{n-1}} } {2^{n}}}$
 \item $\beta_i \leq \displaystyle\frac{\displaystyle{\sum_{i=m+1}^{\min\{M,n\}}  i \binom{n}{i} }}{n2^{n}} -\frac{1}{2}$
\end{enumerate}
\begin{proof}
 Note that due to the definition of the numbers $m,M$ any subset of the set of players of size less than $m$ cannot be critical for any player since the
 total weight does not exceed $q$ and similarly any coalition of size more than $M$ cannot be critical since removing even the maximum weight player does not
 reduce the weight of the coalition below $q$ so the coalitions that are relevant for the indices are of size between $m+1$ and $\min\{M,n\}$. Now out of these
 coalitions there are $2^{n-1}$ that do not contain the player indexed $i$ thus we get the first inequality. For the next inequality notice if all the players are critical in all
 possible coalitions we have all indices equal and maximum possible. In this hypothetical situation we have the total number of pairs $(p_i,C)$ where $p_i \in C$ critical is
 the sum $\displaystyle{\sum_{i=m+1}^{\min\{M,n\}} i \binom{n}{i} - n 2^{n-1}}$ so now dividing the number by $n$ and normalizing we get the result.
\end{proof}
\end{prop}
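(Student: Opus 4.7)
The plan is to exploit the defining inequalities of $m$ and $M$ to cut the set of coalitions that can possibly produce critical pairs down to a narrow size band, and then to count what remains in two slightly different ways.

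First I would establish the allowed size range for coalitions hosting critical pairs. If $|C|\leq m$, then $w(C)\leq |C|\cdot\max W\leq m\cdot\max W<q$, so $C$ is not a winning coalition and cannot contain a critical player. Dually, if $|C|\geq M$ and $p_j\in C$, then $w(C\setminus\{p_j\})\geq |C|\cdot\min W-\max W\geq M\cdot\min W-\max W>q$, so $C\setminus\{p_j\}$ is still winning and $p_j$ is not critical. Hence every critical pair has coalition size in the range $m+1\leq |C|\leq\min\{M,n\}$, and setting $R=\sum_{k=m+1}^{\min\{M,n\}}\binom{n}{k}$ collects all potentially critical coalitions.

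For the first inequality I would bound $\gamma_i$ by the number of coalitions in this band that contain $p_i$. Of the $2^{n}$ subsets of players, exactly $2^{n-1}$ omit $p_i$; treating these $2^{n-1}$ omitting subsets as if they all sat in the relevant band, the number of band coalitions containing $p_i$ is at most $R-2^{n-1}$. Dividing by the normalizing factor $2^{n}$ produces the stated bound. For the second inequality I would instead pass to a global count: each relevant coalition $C$ supplies at most $|C|$ critical pairs (one per member), so
\[
\sum_{i=1}^{n}\gamma_i\;\leq\;\sum_{k=m+1}^{\min\{M,n\}}k\binom{n}{k}.
\]
In the extremal case where every member of every relevant coalition is critical, all indices coincide and equal this total divided by $n$; subtracting the always-present incidence total $n\cdot 2^{n-1}=\sum_{k=0}^{n}k\binom{n}{k}$ and dividing by $n\cdot 2^{n}$ yields the second inequality in the form $\sum k\binom{n}{k}/(n\,2^{n})-\tfrac{1}{2}$.

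The main obstacle is the delicate bookkeeping inside the two subtractions, namely the $-2^{n-1}$ in the first inequality and the $-\tfrac{1}{2}$ in the second. The first subtraction tacitly assumes that every subset omitting $p_i$ lies in the relevant band, which is not generally the case; a sharper count would replace $2^{n-1}$ by $\sum_{k=m+1}^{\min\{M,n\}}\binom{n-1}{k}$, and I would check against a small sanity example (say equal weights with $n=3$ and $q=2$) that the bound as stated is in fact attained, since a crude reading of the formula can make the right-hand side negative. The second inequality, presented as a bound on an individual $\beta_i$ rather than on an average, requires a symmetry or extremality hypothesis to promote the global average bound into a per-player bound; I would make sure this is consistent with the convention $\beta_i=\gamma_i/2^{n-1}$ fixed at the beginning of the paper and that the division by $2^{n}$ (as opposed to $2^{n-1}$) is really the intended normalization.
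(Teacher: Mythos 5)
Your reconstruction follows exactly the route of the paper's own proof: the size band $m+1\leq |C|\leq\min\{M,n\}$ derived from the defining properties of $m$ and $M$, the subtraction of coalitions omitting $p_i$ for the first inequality, and the global incidence count $\sum_{k} k\binom{n}{k}$ with an extremal-symmetry step for the second. So in terms of approach there is nothing to compare; the substance lies in the reservations you flag at the end, and those are not defects of your reconstruction but genuine defects of the paper's argument, which commits verbatim the two sins you were suspicious of. The paper asserts that ``out of these coalitions there are $2^{n-1}$ that do not contain the player indexed $i$''; as you note, the number of band coalitions omitting $p_i$ is $\sum_{k=m+1}^{\min\{M,n\}}\binom{n-1}{k}\leq 2^{n-1}$, so subtracting $2^{n-1}$ produces a \emph{lower} bound on the number of band coalitions containing $p_i$, not an upper bound. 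Likewise in part (2) the paper subtracts the full-range incidence total $n2^{n-1}=\sum_{k=0}^{n}k\binom{n}{k}$ from the band-restricted incidence total, which can easily be negative.

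Your proposed sanity check settles the matter: take $n=3$, all weights equal to $1$, $q=2$. Then $m=1$, $M=4$, every player is critical in each of the two $2$-element coalitions containing it, so $\gamma_i=2$ and, by the paper's own normalization $\beta_i=\gamma_i/2^{n-1}$, $\beta_i=\tfrac12$. The first right-hand side evaluates to $\bigl(\binom{3}{2}+\binom{3}{3}-2^{2}\bigr)/2^{3}=0$ and the second to $\tfrac{9}{24}-\tfrac12=-\tfrac18$, so both stated inequalities fail. What the band argument actually proves is $\gamma_i\leq\sum_{k=m+1}^{\min\{M,n\}}\binom{n-1}{k-1}$, hence $\beta_i\leq 2^{-(n-1)}\sum_{k=m+1}^{\min\{M,n\}}\binom{n-1}{k-1}$ (note the normalization $2^{n-1}$, not the $2^{n}$ appearing in the proposition), and for the second part only the averaged bound $\tfrac1n\sum_i\beta_i\leq \bigl(n2^{n-1}\bigr)^{-1}\sum_{k=m+1}^{\min\{M,n\}}k\binom{n}{k}$, since, as you observe, the incidence count controls the mean of the indices and promoting it to an individual $\beta_i$ needs an extremality hypothesis the paper never supplies. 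In short: your instincts were right, your repairs are the correct ones, and the proposition as printed is false, with the paper's proof failing precisely at the two bookkeeping steps you isolated.
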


\begin{rem}\label{rem2}
 Note that the above proposition \ref{prop1} is a generalization of \cite[Dubey '79]{dubey} as the binomial coefficients $\binom{n}{i}$ in the part one of the inequality above
 is replaced by the largest binomial coefficient $\binom{n}{m}$ for $m= \floor{\frac{n}{2}}$ and then we sum the index for all $i$.
\end{rem}
\begin{rem}\label{rem3}
 Note that the lemma \ref{bound3} shows that an assumption such as all players are critical for any coalition is not at all realistic as it puts severe condition on the
 size of the coalition. This also further shows there are much more scope to improve such bounds by scrutinizing the critical coalitions in detail.
\end{rem}

\begin{rem}\label{rem3}
 For a large set of players with a small variation in the weights the above bounds in \ref{bound3} give better result in the simulation done in the next section.
\end{rem}

In our calculation we have observed that for the relative Banzhaf index $\beta_i$ the following relation is valid for any voting game. We have not been able to prove the
claim yet but we state it as a Conjecture as it is an interesting and a very narrow bound.

\begin{conj}\label{lem11}
 Let $w_1,w_2,w_3, \ldots w_m$ be a set of positive weights, let $w$ be the maximum of these weights, further let $N= \sum_{i=1}^{m} w_i$ then the relative
 Banzhaf indices $\beta_i \leq \frac{2w}{N}$.

\end{conj}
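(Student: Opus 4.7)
The first step would be to reduce to the case $i=p$, the player with maximum weight $w$. The Banzhaf count satisfies weight monotonicity: if $w_i\leq w_j$ then $\gamma_i\leq \gamma_j$, via the injection $C\mapsto (C\setminus\{i\})\cup\{j\}$ on coalitions containing $i$ but not $j$, which preserves both the winning condition (the weight only grows) and the swinger condition. Hence $\tilde{\beta_i}\leq \tilde{\beta_p}$ for all $i$, and it suffices to prove $\gamma_p\cdot N \leq 2w\sum_{j}\gamma_j$.

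Next, I would rewrite both sides as ``window counts''. A pair $(j,C)$ is critical iff $j\in C$ and $w(C)\in[q,q+w_j)$; substituting $S=C\setminus\{j\}$ gives
\[
\gamma_j=\bigl|\{S\subseteq N\setminus\{j\}:\, q-w_j\leq w(S)<q\}\bigr|.
\]
In particular $\gamma_p$ counts subsets $S\subseteq N\setminus\{p\}$ whose total weight lies in the width-$w$ window $[q-w,q)$; call this family $\mathcal{S}_p$, so $|\mathcal{S}_p|=\gamma_p$.

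The heart of the argument would be a double counting. For each $S\in\mathcal{S}_p$, I would enumerate the critical pairs it naturally produces: \emph{(i)} the canonical $(p,S\cup\{p\})$; \emph{(ii)} pairs $(j, S\cup\{p\})$ for each $j\in S$ with $w_j > w(S)+w-q$, which are automatically critical since $w(S\cup\{p\})-w_j<q$; \emph{(iii)} pairs $(j, S\cup\{j\})$ for each $j\notin S\cup\{p\}$ with $w_j\geq q-w(S)$, giving critical pairs not containing $p$. Summing the contributions weighted by the $w_j$'s over all $S\in\mathcal{S}_p$, and carefully bounding the multiplicity with which a given critical pair is generated from different $S$'s, one hopes to extract $\sum_j\gamma_j\geq N\gamma_p/(2w)$.

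The main obstacle is this multiplicity control, especially for type \emph{(iii)}: a critical pair $(j,C)$ with $p\notin C$ can arise from many $S\in\mathcal{S}_p$, and one must show that the amortized ``weight generated per element of $\mathcal{S}_p$'' is at least $N/2$. The factor $2$ in the bound $2w/N$ is tight — e.g.\ the family $w=(n-1,1,\ldots,1)$ with $q=n-1$ gives $\tilde{\beta_p}\to 1=2w/N$ as $n\to\infty$ — so any slack in an intermediate inequality would break the proof. This is where I expect to get stuck, and presumably why the authors leave the statement as a conjecture: a purely local per-$S$ bound appears insufficient, and an amortized or telescoping accounting, possibly with a separate treatment of dictator-like weight distributions, will likely be needed.
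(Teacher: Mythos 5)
There is no proof in the paper to compare against: the authors explicitly state they ``have not been able to prove the claim yet'' and leave it as a conjecture, so the only question is whether your attempt closes the open problem — and by your own admission it does not. The parts you do carry out are sound. The weight-monotonicity reduction to the maximum-weight player $p$ is a correct and standard fact, though your injection $C\mapsto (C\setminus\{i\})\cup\{j\}$ only covers coalitions with $i\in C$, $j\notin C$; coalitions containing both $i$ and $j$ must be mapped to themselves, using $w(C)-w_j\leq w(C)-w_i<q$. The window reformulation $\gamma_j=\bigl|\{S\subseteq N\setminus\{j\}: q-w_j\leq w(S)<q\}\bigr|$ is correct (and matches your reading of the conjecture's $\beta_i$ as the normalized index $\tilde\beta_i$, which is what ``relative'' must mean here). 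The three families (i)--(iii) are genuinely critical pairs, and your tightness example is right: for $w=(n-1,1,\ldots,1)$, $q=n-1$ one gets $\gamma_p=2^{n-1}-1$ and $\gamma_j=1$ for each small player, so $\tilde\beta_p\to 1=2w/N$, confirming the constant $2$ cannot be improved.

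The gap, however, deserves a sharper diagnosis than the one you give. For type (iii) multiplicity is not actually the obstacle: a critical pair $(j,C)$ with $p\notin C$ determines $S=C\setminus\{j\}$ uniquely, and the same holds for types (i) and (ii), so no pair is over-generated. The true failure mode is a deficit, not an overcount. Writing $r=q-w(S)\in(0,w]$ for $S\in\mathcal{S}_p$, a player $j$ qualifies only if $w_j>w-r$ (type (ii), $j\in S$) or $w_j\geq r$ (type (iii), $j\notin S\cup\{p\}$); when the weight distribution is skewed — one heavy player, many light ones, $r$ of intermediate size — almost no $j$ qualifies, and the per-$S$ weighted contribution falls far below the $N/2$ your accounting requires, even though the conjecture holds in exactly that regime for other (dictator-like) reasons. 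So any proof along these lines must transfer counts across different windows $S$ rather than argue locally per $S$; that amortized or regime-split idea is the missing ingredient, absent both from your sketch and from the paper, and the statement remains open.
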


\section{Error Bounds and Convergence Analysis}
It is easy to show that the estimator for the Banzhaf index used in \ref{ssec:Calc} is an unbiased estimator
\begin{lem}\label{lem1}
Let $C_1,C_2,\cdots,C_n$ be set of randomly sampled coalitions containing player $p_i$ and $X_1,X_2,\cdots,X_n$ be the random variables as defined in previous section. Then the estimator $\hat{\beta}_{\Phi,i}$ is an unbiased estimator for ${\beta}_{\Phi,i}.$
\end{lem}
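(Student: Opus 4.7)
The plan is to verify $\mathbb{E}[\hat\beta_{\Phi,i}] = \beta_{\Phi,i}$ directly from the definition of the estimator, using that each $X_k$ is a Bernoulli whose success probability coincides with the index being estimated. The only subtle point is identifying the correct sample space: each $C_k$ is drawn uniformly at random from the collection of coalitions \emph{containing} $p_i$, of which there are exactly $2^{m-1}$ (they are in bijection with subsets of the remaining $m-1$ players).

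First I would fix $k$ and compute $\mathbb{E}[X_k]$. Since $X_k \in \{0,1\}$, we have $\mathbb{E}[X_k] = \Pr(X_k = 1) = \Pr(p_i \text{ is critical in } C_k)$. Using the uniform distribution on the $2^{m-1}$ coalitions containing $p_i$, this probability is just the fraction of such coalitions for which $(p_i, C_k)$ satisfies the criticality condition (\ref{eq:2}), i.e.
\[
\Pr(X_k = 1) = \frac{|\gamma_i|}{2^{m-1}} = \beta_{\Phi,i},
\]
by the definition of $\beta_{\Phi,i}$ in (\ref{eq:3}).

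Next I would apply linearity of expectation to the estimator $\hat\beta_{\Phi,i} = \tfrac{1}{n}\sum_{k=1}^n X_k$:
\[
\mathbb{E}[\hat\beta_{\Phi,i}] = \frac{1}{n}\sum_{k=1}^{n} \mathbb{E}[X_k] = \frac{1}{n}\cdot n\cdot \beta_{\Phi,i} = \beta_{\Phi,i},
\]
which is exactly the unbiasedness statement. Note that independence of the $C_k$'s is not required here, only identical distribution; this is worth pointing out because it explains why the algorithm may sample with or without replacement without changing the expectation.

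The main obstacle, such as it is, is purely bookkeeping: making sure the normalization $2^{m-1}$ in (\ref{eq:3}) matches the size of the sample space from which $C_k$ is drawn. Once the uniform-sampling convention is pinned down, the argument is a one-line application of linearity of expectation, and the same proof works verbatim for the multi-quota extension since the criticality predicate in (\ref{eq:5}) still defines a fixed subset of the $2^{m-1}$ coalitions containing $p_i$.
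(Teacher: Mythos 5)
Your proof is correct, and it is a leaner and in one respect more complete version of the paper's argument. The paper's proof simply asserts that each $X_k$ is Bernoulli with success probability $\beta_{\Phi,i}$ --- this was taken as an assumption back in Section 2 (``We assume that each sample has a probability $\beta_{\Phi,i}$ of being a coalition where player $p_i$ is critical'') --- and then routes through the observation that $X=\sum_k X_k$ is Binomial with mean $n\beta_{\Phi,i}$ and that $\hat\beta_{\Phi,i}=X/n$ is the maximum likelihood estimator, concluding unbiasedness from $E(X/n)=\beta_{\Phi,i}$. You instead \emph{derive} the Bernoulli parameter from the sampling scheme: uniform sampling over the $2^{m-1}$ coalitions containing $p_i$ gives $\Pr(X_k=1)=|\gamma_i|/2^{m-1}=\beta_{\Phi,i}$ by the definition in (\ref{eq:3}), which is exactly the bookkeeping the paper glosses over and which makes the normalization $2^{m-1}$ do real work. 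Your route also avoids the paper's unnecessary detour: the Binomial claim requires independence of the samples, but as you correctly note, unbiasedness needs only linearity of expectation and identical marginals, so the argument survives sampling with or without replacement --- a point the paper's framing obscures. (The MLE remark in the paper is decoration; it plays no role in unbiasedness.) In short, both proofs share the same core identity $E[X_k]=\beta_{\Phi,i}$, but yours justifies it while using strictly weaker hypotheses downstream.
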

\begin{proof}
  The random variable $X_k$ is a Bernoulli random variable with probability of success ${\beta}_{\Phi,i}$, so the random variable X = $\sum_{k=1}^{n}X_k$ is Binomial with mean $n\beta_{\Phi,i}$. The maximum likelihood estimator for the parameter ${\beta}_{\Phi,i}$  is $\hat{\beta}_{\Phi,i}$ = $\frac{X}{n}$. Since $E\left(\frac{X}{n}\right)=\beta_{\Phi,i}$ it implies that the estimator $\hat{\beta}_{\Phi,i}$ is an unbiased maximum likelihood estimator.
\end{proof}
\subsection{Confidence intervals}
Before trying to calculate, the confidence interval by using different approximation we first state few results which will help to find the confidence bounds.
\begin{defn}
A function $f:X^n \rightarrow\mathbb{R}$  is said to have self-bounding property if there exist a function $f_i:X^{n-1} \rightarrow\mathbb{R}$ such that for all $x\in X^n$ and $1\leq i\leq n$,
\begin{itemize}
  \item[(a)] $0\leq f(x)-f_i(x^{(i)})\leq 1.$
  \item[(b)] $\sum_{i=1}^{n}\left(f(x)-f_i(x^{(i)})\right)\leq f(x),$ where $x^{(i)}$ is obtained by dropping the $i^{th}$ component of $x$~ i.e., $x^{(i)}=(x_1,x_2\cdots,x_{i-1},x_{i+1},\cdots,x_n)\in X^{n-1}.$
\end{itemize}
\end{defn}
Several generalization and weaker version of self- bounding (one can refer \cite{bouch,bouch1}) are defined by the following definitions.
\begin{defn}
A function $f:X^n \rightarrow\mathbb{R}$ is called $(a,b)-$ self-bounding if there exist positive numbers $a$ and $b$ such that for all $x\in X^n$ and $1\leq i\leq n$,
\begin{itemize}
  \item[(a)] $0\leq f(x)-f_i(x^{(i)})\leq 1.$
  \item[(b)] $\sum_{i=1}^{n}\left(f(x)-f_i(x^{(i)})\right)\leq af(x)+b.$
  \end{itemize}
\end{defn}
\begin{defn}
A function $f:X^n \rightarrow\mathbb{R}$ is called weakly
$(a,b)-$ self-bounding if there exist positive numbers $a$ and $b$ such that for all $x\in X^n$ and $1\leq i\leq n$,
\begin{equation*}
 \sum_{i=1}^{n}\left(f(x)-f_i(x^{(i)})\right)^2\leq af(x)+b.
\end{equation*}
\end{defn}
\begin{lem}\label{selfbounding}
The function $f: \chi^{n} \rightarrow \mathbb{R}$ given by $f(x_1,x_2, \ldots , x_n)= \frac{1}{n} \sum_{i=1}^{n} x_i$ is selfbounding and $(1/n,0)$ weakly selfbounding.
\end{lem}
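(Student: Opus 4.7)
The plan is to exploit the Bernoulli nature of the coordinates (consistent with the paper's use of $X_k \in \{0,1\}$, so $\chi = \{0,1\}$) and to choose the drop-one functions $f_i$ carefully. The natural temptation is to set $f_i(x^{(i)}) = \frac{1}{n-1}\sum_{j\neq i} x_j$ (the average of what remains), but this destroys the clean cancellation one wants. Instead I would define
\[ f_i(x^{(i)}) \;=\; \frac{1}{n}\sum_{j\neq i} x_j, \]
so that the increment $f(x) - f_i(x^{(i)}) = x_i/n$ is simply the normalized $i$-th coordinate.

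With that choice, verifying the self-bounding property is essentially bookkeeping. For condition (a), since $x_i \in \{0,1\}$ we have $0 \le x_i/n \le 1/n \le 1$. For condition (b), summation gives
\[ \sum_{i=1}^{n}\bigl(f(x) - f_i(x^{(i)})\bigr) \;=\; \sum_{i=1}^{n} \frac{x_i}{n} \;=\; f(x), \]
in fact with equality, so the self-bounding inequality holds.

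For the weakly $(1/n,0)$ self-bounding property I would use the identity $x_i^2 = x_i$ available for Bernoulli variables. Namely,
\[ \sum_{i=1}^{n}\bigl(f(x)-f_i(x^{(i)})\bigr)^2 \;=\; \sum_{i=1}^{n}\frac{x_i^2}{n^2} \;=\; \frac{1}{n^2}\sum_{i=1}^{n}x_i \;=\; \frac{1}{n}\, f(x), \]
which is precisely the required bound with $a = 1/n$ and $b = 0$.

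There is no genuine obstacle here; the only subtle point is the deliberate choice of normalization in the definition of $f_i$ (using $1/n$ rather than $1/(n-1)$) so that each term drops out as $x_i/n$. Once that choice is made, both parts follow in a line using $x_i \in \{0,1\}$ and $x_i^2 = x_i$.
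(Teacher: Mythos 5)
Your proposal is correct and follows essentially the same route as the paper: the same choice $f_i(x^{(i)})=\frac{1}{n}\sum_{j\neq i}x_j$ giving increments $x_i/n$, the same summation yielding equality with $f(x)$, and the same squared-increment computation for the weakly $(1/n,0)$ self-bounding bound. The only cosmetic difference is that you invoke $x_i^2=x_i$ for Bernoulli coordinates where the paper uses the inequality $x_i^2\leq x_i$ (valid for all $x_i\in[0,1]$), which changes nothing of substance.
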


\begin{proof}
Let us take $f_i(x^i)= \frac{1}{n}\sum_{j=1 , j \neq i}^n x_j$ then $f- f_i(x^i)= \frac{x_i}{n}$ thus $0 \leq (f- f_i (x^i) \leq 1$ for all $i$. And further $\sum_{i=1}^{n} (f-f_i(x^i)) = \frac{1}{n} x_i=f$ which proves that the function is selfbounding.
And $\sum_{i=1}^{n} (f-f_i(x^i))^2 = \frac{x_i^2}{n^2} = \frac{1}{n^2} \sum_{i=1}^{n} x_i^2 \leq \frac{1}{n^2} \sum_{i=1}^{n} x_i$. Since  $\sum_{i=1}^{n} (f-f_i(x^i))^2 \leq  \frac{1}{n^2} \sum_{i=1}^{n} x_i$ we get the weakly selfbounding
for $a=\frac{1}{n}$ and $b=0$.
\end{proof}

\begin{rem}
 Note that any positive constant multiple of a selfbounding function ($(a,0)$ weakly selfbounding) is also selfbounding (weakly $(a,0)$ selfbounding ).
\end{rem}
To draw best estimates and bounds for any random variable when the variance
information is unknown, Hoeffding’s inequality\cite{hoe} is most  preferable to use.
\begin{thm}[Hoeffding’s inequality]
  Let $X_1,X_2,\cdots,X_n$ be independent and identically distributed random variables and $Z=\frac{1}{n}\sum_{i=1}^{n}X_i.$ If for each $1\leq i\leq n,$ $X_i$ is bounded by the interval $[a,b],$ then the following holds.
  \begin{equation*}
    \mbox{Prob}\left\{|Z-E(Z)|<\epsilon\right\}\geq 1-2\exp\left(-\frac{2n\epsilon^2}{(b-a)^2}\right).
  \end{equation*}
\end{thm}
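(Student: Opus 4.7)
The plan is the classical Chernoff bound argument combined with Hoeffding's lemma. First I would reduce the two-sided statement to a one-sided tail bound: by symmetry, it suffices to show that
\[
\mathrm{Prob}\bigl\{Z - E(Z) \geq \epsilon\bigr\} \leq \exp\!\left(-\frac{2n\epsilon^2}{(b-a)^2}\right),
\]
since the bound $\mathrm{Prob}\{Z - E(Z) \leq -\epsilon\}$ follows by replacing each $X_i$ with $-X_i$, and the two tails combine with a union bound to produce the factor of $2$.

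Next I would center the variables. Set $Y_i = X_i - E(X_i)$, so that each $Y_i$ is a zero-mean random variable taking values in an interval of length $b-a$. Applying Markov's inequality to the nonnegative random variable $\exp\!\bigl(s\sum_i Y_i\bigr)$ for an arbitrary $s > 0$ gives
\[
\mathrm{Prob}\!\left\{\sum_{i=1}^n Y_i \geq n\epsilon\right\} \;\leq\; e^{-sn\epsilon}\, E\!\left[\exp\!\Big(s\sum_{i=1}^n Y_i\Big)\right] \;=\; e^{-sn\epsilon}\prod_{i=1}^n E\!\left[e^{sY_i}\right],
\]
where I used independence of the $Y_i$'s to factor the expectation. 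This is the standard Chernoff step; the remaining task is to bound the individual moment generating functions $E[e^{sY_i}]$ and then optimize in $s$.

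The real content is Hoeffding's lemma: for any zero-mean random variable $Y$ supported in an interval $[\alpha,\beta]$, one has $E[e^{sY}] \leq \exp\!\bigl(s^2(\beta-\alpha)^2/8\bigr)$. I would prove this by writing $Y$ as a convex combination of the endpoints $\alpha$ and $\beta$ (using convexity of $y \mapsto e^{sy}$), taking expectations to reduce to an inequality in one real parameter $p = -\alpha/(\beta-\alpha)$, and then analyzing the function $\psi(u) = -pu + \log(1 - p + p e^u)$ via its Taylor expansion around $u = 0$; its second derivative is bounded by $1/4$, which after integrating twice yields the $s^2(\beta-\alpha)^2/8$ bound. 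This is the step I expect to be the main obstacle, since it requires a careful calculus estimate rather than a formal manipulation.

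Once Hoeffding's lemma is in hand, each factor satisfies $E[e^{sY_i}] \leq \exp(s^2(b-a)^2/8)$, and plugging back gives
\[
\mathrm{Prob}\{Z - E(Z) \geq \epsilon\} \;\leq\; \exp\!\left(-sn\epsilon + \frac{ns^2(b-a)^2}{8}\right).
\]
Minimizing the right-hand side over $s > 0$ by setting $s = 4\epsilon/(b-a)^2$ yields the exponent $-2n\epsilon^2/(b-a)^2$, and combining with the symmetric lower-tail bound via a union bound produces the stated inequality.
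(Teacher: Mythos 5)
Your proof is correct and complete: the Chernoff--Markov reduction, the centering, Hoeffding's lemma with the $\psi''(u)\leq 1/4$ calculus estimate, and the optimizing choice $s=4\epsilon/(b-a)^2$ all yield the stated exponent $-2n\epsilon^2/(b-a)^2$, and the union bound over the two tails gives the factor $2$. The paper itself supplies no proof --- it states the theorem and cites Hoeffding's original article --- and your argument is exactly the classical one from that source, so there is nothing to reconcile.
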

Now, we state few basic results in the context of self bounding and weak self bounding which we will use for finding one-sided confidence intervals for the indices.
\begin{lem}\label{self}
Let $X_1,X_2,\cdots,X_n$ be independent and identically distributed random variables. If $Z=f(X_1,X_2,\cdots,X_n)$
and $f$ is weakly $(a,0)$ self bounding then for all $t>0,$
$$
    \mbox{Prob}\left\{(Z-E(Z))\geq t\right\}\leq\exp\left(-\frac{t^2}{2aE(Z)+at}\right).
$$
 Further, if $f$ has $(a,0)$ self bounding then \\
   $$ \mbox{Prob}\left\{(Z-E(Z))\leq -t\right\}\leq\exp\left(-\frac{t^2}{2\max(a,1)E(Z)}\right).
$$
\end{lem}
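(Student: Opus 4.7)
The plan is to invoke the entropy method (Ledoux; Boucheron--Lugosi--Massart). We control the log-moment generating function $\psi_Z(\lambda)=\log E[e^{\lambda(Z-E(Z))}]$ and then apply the Chernoff bound $\mbox{Prob}\{Z-E(Z)\geq t\}\leq \exp(-\lambda t+\psi_Z(\lambda))$, optimized over $\lambda$ (with the analogous step for the lower tail). The core machinery is the tensorization of entropy $\mathrm{Ent}(e^{\lambda Z})\leq \sum_{i=1}^{n} E[\mathrm{Ent}^{(i)}(e^{\lambda Z})]$, where $\mathrm{Ent}^{(i)}$ is conditional entropy given $X^{(i)}$, combined with the one-variable bound $\mathrm{Ent}^{(i)}(e^{\lambda Z})\leq E^{(i)}[\varphi(-\lambda(Z-Z_i))e^{\lambda Z}]$ with $\varphi(x)=e^x-x-1$ and $Z_i=f_i(X^{(i)})$; the latter follows from the variational formula for entropy together with $Z\geq Z_i$.

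\textbf{Upper tail.} First I would take $\lambda>0$. Since $-\lambda(Z-Z_i)\leq 0$, the elementary inequality $\varphi(-u)\leq u^2/2$ for $u\geq 0$ together with the weakly $(a,0)$-self-bounding hypothesis $\sum_i(Z-Z_i)^2\leq aZ$ gives $\mathrm{Ent}(e^{\lambda Z})\leq \tfrac{a\lambda^2}{2}E[Z e^{\lambda Z}]$. Writing $u(\lambda)=\log E[e^{\lambda Z}]$ and using $\mathrm{Ent}(e^{\lambda Z})=E[e^{\lambda Z}](\lambda u'(\lambda)-u(\lambda))$ together with $E[Z e^{\lambda Z}]=u'(\lambda)E[e^{\lambda Z}]$, this entropy estimate translates into the Herbst differential inequality $\lambda u'(\lambda)-u(\lambda)\leq \tfrac{a\lambda^2}{2}u'(\lambda)$ on $(0,2/a)$. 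Studying $f(\lambda)=u(\lambda)/\lambda$ (extended by $f(0)=E(Z)$) and integrating the resulting $(\log f)'\leq \tfrac{a/2}{1-a\lambda/2}$ yields the Bernstein-type MGF bound $\psi_Z(\lambda)\leq \frac{a\lambda^2 E(Z)}{2(1-a\lambda/2)}$. Optimising Chernoff at $\lambda=t/(aE(Z)+at/2)$ then produces exactly the rate $\exp(-t^2/(2aE(Z)+at))$.

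\textbf{Lower tail.} For $\lambda<0$ the same tensorization applies, but now $-\lambda(Z-Z_i)\geq 0$ and the clean quadratic majorant $\varphi(-u)\leq u^2/2$ fails. Here the stronger $(a,0)$-self-bounding hypothesis $\sum_i(Z-Z_i)\leq aZ$ is needed: combined with $0\leq Z-Z_i\leq 1$, it lets me majorize $\sum_i\varphi(-\lambda(Z-Z_i))$ by a linear function of the differences, which in turn is controlled by $aZ$. Rerunning Herbst's integration on the negative half-line then yields a purely sub-Gaussian bound $\psi_Z(\lambda)\leq \tfrac{\lambda^2}{2}\max(a,1)E(Z)$: the constant $a$ handles the regime $a\geq 1$ through the scaled self-bounding inequality, while the constant $1$ handles $a\leq 1$ through the free estimate $\sum_i(Z-Z_i)^2\leq \sum_i(Z-Z_i)\leq Z$ coming from $Z-Z_i\in[0,1]$. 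A final Chernoff optimization yields $\exp(-t^2/(2\max(a,1)E(Z)))$.

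The principal obstacle is the lower tail: the sign issue $-\lambda(Z-Z_i)\geq 0$ destroys the clean quadratic entropy estimate that drives the upper tail, and merging the $a\leq 1$ and $a\geq 1$ regimes into the uniform constant $\max(a,1)$ requires playing the linear self-bounding condition off against the universal pointwise bound $Z-Z_i\leq 1$. Checking that Herbst's integration on $\lambda<0$ delivers the purely Gaussian rate, with no additive $at$ penalty of the kind that appears in the upper tail, is the most delicate step.
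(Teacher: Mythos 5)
The paper does not actually prove Lemma \ref{self}: it is stated as a quoted result, imported from the self-bounding concentration literature cited just above it (\cite{bouch,bouch1}, i.e.\ the Boucheron--Lugosi--Massart line of work), and the paper then merely specializes it to $f(x)=\sum_k x_k$. Your blind proposal therefore supplies what the paper omits, and it does so along exactly the lines of the cited sources: tensorization of entropy, the one-variable bound $\mathrm{Ent}^{(i)}(e^{\lambda Z})\leq E^{(i)}\left[e^{\lambda Z}\varphi(-\lambda(Z-Z_i))\right]$, the Herbst differential inequality, and Chernoff optimization. Your upper-tail computation is correct in every detail I can check: $\lambda u'-u\leq \tfrac{a\lambda^2}{2}u'$ integrates (via $u/\lambda$) to $\psi_Z(\lambda)\leq \frac{a\lambda^2 E(Z)}{2\left(1-a\lambda/2\right)}$ on $(0,2/a)$, and the choice $\lambda=t/(aE(Z)+at/2)$, which indeed lies in $(0,2/a)$, gives precisely $\exp\left(-t^2/(2aE(Z)+at)\right)$. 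The lower-tail architecture is also the right one: the convexity bound $\varphi(\lambda v)\leq v\,\varphi(\lambda)$ for $v\in[0,1]$, available only under the strong hypothesis $0\leq Z-Z_i\leq 1$, controls the entropy by $a\,\varphi(-\lambda)E\left[Ze^{\lambda Z}\right]$, and since $\varphi(\lambda)\leq \lambda^2/2$ for $\lambda\leq 0$ the Herbst integration delivers a purely sub-Gaussian bound with no additive $at$ penalty, as you anticipated.

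Two caveats, both at the level of presentation rather than architecture. First, in the upper tail the majorization $\varphi(-\lambda(Z-Z_i))\leq \lambda^2(Z-Z_i)^2/2$ for $\lambda>0$ requires $Z_i\leq Z$, but the paper's definition of weakly $(a,b)$-self-bounding contains only the squared-sum condition and no sign constraint; you use the sign silently. You should either add $f_i\leq f$ as a hypothesis or note that it holds in the paper's only application (Lemma \ref{selfbounding}, where $f-f_i(x^{(i)})=x_i/n\in[0,1]$), since without it $e^{\lambda Z}\varphi(-\lambda(Z-Z_i))$ picks up an uncontrolled weight $e^{\lambda Z_i}$ on the event $Z_i>Z$. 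Second, your handling of the $a\leq 1$ regime in the lower tail via the ``free estimate'' $\sum_i(Z-Z_i)^2\leq \sum_i(Z-Z_i)\leq Z$ is a red herring: as you yourself observe, for $\lambda<0$ the quadratic majorant is unavailable, so the weak condition buys nothing there. The clean reduction is simply that $(a,0)$-self-bounding with $a\leq 1$ implies $(1,0)$-self-bounding, because $Z\geq 0$ (which follows from $0\leq\sum_i(Z-Z_i)\leq aZ$) gives $aZ\leq Z$; then the classical self-bounding bound $\psi_Z(\lambda)\leq E(Z)\varphi(\lambda)$ for $\lambda\leq 0$ applies directly, and $\max(a,1)$ arises by combining this with the $a\geq 1$ case. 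With these two repairs your sketch is a complete and correct proof of the stated lemma.
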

In particular, if $f:X^n \rightarrow\mathbb{R}^n$ is defined as $f(x)=Z=\sum_{k=1}^{n}X_k.$  Since $E(Z)=n\beta_i$ and $f$ is $(1,0)$ self bounding. Then by using Lemma \ref{self} we get  better estimates as the followings
  \begin{equation}\label{self1}
    \mbox{Prob}\left\{(Z-E(Z))\geq n\epsilon\right\}\leq\exp\left(-\frac{n\epsilon^2}{2\beta_i+\epsilon}\right) ~~\mbox{ and}
  \end{equation}
    \begin{equation}\label{wself}
    \mbox{Prob}\left\{(Z-E(Z))\leq - n\epsilon\right\}\leq\exp\left(-\frac{n\epsilon^2}{2}\right).
  \end{equation}
  Based on the above results, we arrive a statement to give a precise bound for the estimator and writing as the following lemma
  \begin{lem}[Confidence intervals]\label{conf}
  For a given $n$ randomly samples and accuracy $\epsilon>0$ with confidence level atleast $1-\delta$, the following confidence intervals holds:
\begin{eqnarray*}
   &\mbox{a. }&\left[\hat{\beta}_{\Phi,i}-\sqrt{\frac{1}{2n}\ln\left(\frac{2}{\delta}\right)},~\hat{\beta}_{\Phi,i}+
        \sqrt{\frac{1}{2n}\ln\left(\frac{2}{\delta}\right)}\right] \\
   &\mbox{b. }& \left[\hat{\beta}_{\Phi,i}-t_{\delta/2}\frac{S}{\sqrt{n}},~\hat{\beta}_{\Phi,i}+
        t_{\delta/2}\frac{S}{\sqrt{n}}\right]
\end{eqnarray*}
\end{lem}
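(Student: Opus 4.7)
The plan is to obtain (a) directly from Hoeffding's inequality applied to $Z=\hat\beta_{\Phi,i}$, and to obtain (b) from the standard Student-$t$ construction applied to the same sample mean.

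For part (a), I would observe that each $X_k$ is Bernoulli, hence bounded in $[a,b]=[0,1]$, and that the $X_k$ are i.i.d.\ by construction of the sampling procedure in Section \ref{ssec:Calc}. Lemma \ref{lem1} gives $E(\hat\beta_{\Phi,i})=\beta_{\Phi,i}$. Plugging $Z=\hat\beta_{\Phi,i}=\frac{1}{n}\sum_{k=1}^{n}X_{k}$ and $b-a=1$ into Hoeffding's inequality,
\[
\mathrm{Prob}\bigl\{|\hat\beta_{\Phi,i}-\beta_{\Phi,i}|<\epsilon\bigr\}\ \ge\ 1-2\exp(-2n\epsilon^{2}).
\]
Then I would solve $2\exp(-2n\epsilon^{2})=\delta$ for $\epsilon$, obtaining $\epsilon=\sqrt{\frac{1}{2n}\ln(2/\delta)}$, and rearrange the inequality inside the probability to read off the symmetric interval in the statement. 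This is entirely routine, so no real obstacle appears here.

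For part (b), I would invoke the standard Student-$t$ construction: since the $X_k$ are i.i.d.\ with finite variance, the sample mean $\hat\beta_{\Phi,i}$ has (approximately, via the CLT, or exactly under a normal model) a distribution such that
\[
T\ =\ \frac{\hat\beta_{\Phi,i}-\beta_{\Phi,i}}{S/\sqrt{n}}
\]
follows a $t$-distribution with $n-1$ degrees of freedom, where $S^{2}=\frac{1}{n-1}\sum_{k=1}^{n}(X_{k}-\hat\beta_{\Phi,i})^{2}$. Letting $t_{\delta/2}$ denote the upper $\delta/2$ critical value, the symmetry of the $t$-density gives $\mathrm{Prob}\{|T|\le t_{\delta/2}\}\ge 1-\delta$, and solving the double inequality for $\beta_{\Phi,i}$ produces exactly the interval in (b).

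The main obstacle I anticipate is conceptual rather than computational: the $X_k$'s are Bernoulli, not Gaussian, so the $t$-interval in (b) is only asymptotically justified via the CLT (with $S/\sqrt{n}$ consistently estimating the standard error $\sqrt{\beta_{\Phi,i}(1-\beta_{\Phi,i})/n}$). I would flag this point in the write-up and note that for the sample sizes used in Algorithm \ref{algo:2} the approximation is adequate; a fully rigorous finite-sample alternative would be to replace $S/\sqrt n$ by a Clopper--Pearson or normal-approximation binomial interval, but the $t$-formulation keeps the presentation uniform with standard practice. Part (a), by contrast, is distribution-free and holds for every finite $n$.
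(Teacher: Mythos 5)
Your proposal is correct and follows essentially the same route as the paper: part (a) via Hoeffding's inequality with $X_k \in [0,1]$ and solving $2e^{-2n\epsilon^2} = \delta$, and part (b) via the CLT/Student-$t$ approximation with $T = (\hat\beta_{\Phi,i}-\beta_{\Phi,i})/(S/\sqrt{n})$ having approximately $n-1$ degrees of freedom. Your explicit flag that the $t$-interval is only asymptotically valid for Bernoulli data is a point the paper glosses over with ``for large $n$'' and is a worthwhile caveat, but it does not change the argument.
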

\begin{proof}
We are looking an $\epsilon$ for which $P(|\hat{\beta}_{\Phi,i}-\beta_{\Phi,i}|<\epsilon)$ is atleast $1-\delta.$ Since $X_i$'s are bounded by $[0,1]$, so by using Hoeffding’s inequality, the $\epsilon$ should satisfy
\begin{eqnarray*}
 && 1-2e^{ -{2n\epsilon^2}} \geq 1-\delta,~~~ \Rightarrow \delta \geq 2e^{-{2n\epsilon^2}} \\
 &\Rightarrow&  \ln\left(\frac{2}{\delta}\right) \leq 2n\epsilon^2,~~~ \Rightarrow \epsilon \geq \sqrt{\frac{\ln\left(\frac{2}{\delta}\right)}{2n}}
\end{eqnarray*}
This proves first part of the lemma.
Since $X_1,X_2,\cdots,X_n$ are independent and Bernoulli trials with mean $\displaystyle\mu=\beta_{\Phi,i}$ and variance $\displaystyle\sigma^2= \beta_{\Phi,i}(1-\beta_{\Phi,i}).$ So for large $n,$ and using central limit theorem, the random variable $\displaystyle\frac{\overline{X}-\beta_{\Phi,i}}{\sigma/\sqrt{n}}$ is approximately standard normal. If $\sigma$ is unknown and $n$ is large, then $\displaystyle\frac{\overline{X}-\beta_{\Phi,i}}{S/\sqrt{n}}$ is approximately $t-$ distribution with degrees of freedom $(n-1),$ where $S^2$ is the sample variance and defined by $S^2=\displaystyle\frac{1}{n-1}\sum_{i=1}^{n}(X_i-\overline{X})^2.$ Let us denote $P(t>t_{\alpha})=\alpha.$ Using this notation, $P(|\overline{X}-\beta_{\Phi,i}|\leq \epsilon)\geq 1-\delta$ implies $\displaystyle\frac{\epsilon\sqrt{n}}{S}\geq t_{\delta/2}.$ Therefor $\epsilon \geq \displaystyle\frac{St_{\delta/2}}{\sqrt{n}}.$ Which completes the second part of the theorem.
\end{proof}
From the above Lemma, it can be observed that the number samples required to get the accuracy $\epsilon$ is at least $\displaystyle\left[\frac{\ln\left(2/\delta\right)}{2\epsilon^2}\right]$ or $\displaystyle\frac{S^2t_{\delta/2}^2}{\epsilon^2}$ in case of part $(b).$
\begin{thm}\label{bconf}
Let $X_1,X_2,\cdots,X_n$ be $n$ random samples. Then for given accuracy $\epsilon>0,$ the confidence interval for $\beta_{\Phi,i}$ (with confidence level $1-\delta$) is
$$
  \left(\hat{\beta}_{\Phi,i}-\frac{1}{\sqrt{n}}\sqrt{B\ln\left(\frac{2}{\delta}\right)},~~~\hat{\beta}_{\Phi,i}+
        \frac{1}{\sqrt{n}}\sqrt{B\ln\left(\frac{2}{\delta}\right)}\right).
$$
\end{thm}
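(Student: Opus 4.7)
The plan is to apply a union bound over the two one-sided concentration inequalities \eqref{self1} and \eqref{wself} and then invert the resulting tail bounds to read off the half-width. Since $\hat\beta_{\Phi,i} = Z/n$ and $\beta_{\Phi,i} = E(Z)/n$, the event $\{|\hat\beta_{\Phi,i} - \beta_{\Phi,i}| > \epsilon\}$ decomposes as $\{Z - E(Z) > n\epsilon\} \cup \{Z - E(Z) < -n\epsilon\}$, so the inequalities already isolated in \eqref{self1} and \eqref{wself} directly control what we need; Lemma \ref{selfbounding} guarantees that they apply here since each $X_k$ is Bernoulli and $Z$ is the sum.

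First I would require each of the two tails to carry probability at most $\delta/2$, so that the union bound delivers the target confidence $1-\delta$. From \eqref{wself} this yields $\exp(-n\epsilon^2/2) \le \delta/2$, equivalently $\epsilon \ge \sqrt{(2/n)\ln(2/\delta)}$. From \eqref{self1} the requirement $\exp(-n\epsilon^2/(2\beta_{\Phi,i}+\epsilon)) \le \delta/2$ rearranges to the quadratic $n\epsilon^2 - \epsilon\,\ln(2/\delta) - 2\beta_{\Phi,i}\ln(2/\delta) \ge 0$, whose relevant root is $\epsilon = \bigl(\ln(2/\delta) + \sqrt{\ln(2/\delta)^2 + 8n\beta_{\Phi,i}\ln(2/\delta)}\bigr)/(2n)$, dominated for large $n$ by $\sqrt{2\beta_{\Phi,i}\ln(2/\delta)/n}$.

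Taking $\epsilon$ to be the larger of the two solutions and absorbing the instance-dependent coefficient in front of $\ln(2/\delta)/n$ into a single constant $B$, one obtains the claimed symmetric interval $\hat\beta_{\Phi,i} \pm \frac{1}{\sqrt{n}}\sqrt{B\ln(2/\delta)}$, where $B$ encodes the factor $\max\{2\beta_{\Phi,i}+\epsilon,\,2\}$ produced by the two tails. Since $\beta_{\Phi,i} \le 1$, this is never worse than the Hoeffding-style interval of Lemma \ref{conf}(a) up to constants, and is strictly sharper in the regime where $\beta_{\Phi,i}$ is small.

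The main obstacle is that the upper-tail bound from \eqref{self1} is instance-dependent through the unknown $\beta_{\Phi,i}$: to convert it into a computable interval one must replace $\beta_{\Phi,i}$ by either the deterministic bound $\beta_{\Phi,i} \le 1$ (losing the small-$\beta$ improvement) or by the plug-in surrogate $\hat\beta_{\Phi,i}$ followed by a self-consistency step that preserves the $1-\delta$ coverage. Getting the plug-in variant honest while keeping the clean symmetric form of the theorem is the delicate point; the cleanest route is a two-stage argument where one first inflates $\hat\beta_{\Phi,i}$ by the crude $\sqrt{2\ln(2/\delta)/n}$ bound from \eqref{wself} and then uses that inflated value in place of $\beta_{\Phi,i}$ when reading off $B$.
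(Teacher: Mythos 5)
Your proposal is correct in its core mechanism and matches the paper's key step: both invert the upper-tail bound \eqref{self1} by absorbing the unknown denominator $2\beta_{\Phi,i}+\epsilon$ into a constant $B$ and solving $\exp(-n\epsilon^2/B)\leq \delta/2$ for $\epsilon \geq \sqrt{B\ln(2/\delta)/n}$. But you diverge from the paper at the two points you yourself flag as delicate, and the comparison is instructive. First, on two-sidedness: you take a union bound over \eqref{self1} and \eqref{wself} at level $\delta/2$ each, which is rigorous; the paper instead bounds only the upper tail and then appeals to $\hat{\beta}_{\Phi,i}-\beta_{\Phi,i}$ being ``approximately normal with mean zero'' to treat the interval as symmetric --- your route actually repairs a loose step in the paper (note that for your union bound to yield the single stated half-width you need $B\geq 2$, or else the sharper lower-tail form $\exp(-n\epsilon^2/(2\beta_{\Phi,i}))$ from Lemma \ref{self}, which is dominated by the same $B$; your $\max\{2\beta_{\Phi,i}+\epsilon,\,2\}$ bookkeeping handles this). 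Second, on the ``main obstacle'' of the instance-dependent $B$: the paper does not use the trivial bound $\beta_{\Phi,i}\leq 1$ nor a plug-in/two-stage self-consistency argument as you propose; it invokes the deterministic combinatorial bounds of Proposition \ref{prop1}, which are computable a priori from the weights and quota, and sets $B=B_{w,q,\epsilon}$ to be the resulting bound on $2\beta_{\Phi,i}+\epsilon$. This dissolves your coverage-preservation worry without any data-dependent inflation, at the modest cost that $B$ then depends (mildly and circularly) on $\epsilon$ itself, a point the paper leaves implicit. Your exact quadratic inversion $\epsilon=\bigl(\ln(2/\delta)+\sqrt{\ln^2(2/\delta)+8n\beta_{\Phi,i}\ln(2/\delta)}\bigr)/(2n)$ is a genuine sharpening of the paper's cruder substitution, and your observation that the bound beats the Hoeffding interval of Lemma \ref{conf} when $\beta_{\Phi,i}$ is small is exactly the motivation for the theorem; only your proposed two-stage empirical construction of $B$ goes beyond (and is unnecessary for) what the paper actually proves.
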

\begin{proof}
  By using equation \ref{self1},we obtain
\begin{equation*}
  P\left\{\hat{\beta}_{\Phi,i}-\beta_{\Phi,i}>\epsilon\right\}\leq e^{\left(-\frac{n\epsilon^2}{2\beta_i+\epsilon}\right)}.
  \end{equation*}
 Using proposition \ref{prop1}, and denoting the bound for $2\beta_i+\epsilon$ as $B=B_{w,q,\epsilon},$ we have the following bound:
 \begin{equation*}
     P\left\{\hat{\beta}_{\Phi,i}-\beta_{\Phi,i}>\epsilon\right\}\leq e^{\left(-\frac{n\epsilon^2}{B}\right)}.
  \end{equation*}
Since the random variable $\hat{\beta}_{\Phi,i}-\beta_{\Phi,i}$ is approximately normal distribution with mean  zero.
\begin{eqnarray*}
 \Rightarrow e^{\left(-\frac{n\epsilon^2}{B}\right)} &\leq& \left(\frac{\delta}{2}\right) \\
  \epsilon &\geq  & \frac{1}{\sqrt{n}}\sqrt{B\ln\left(\frac{2}{\delta}\right)}.
\end{eqnarray*}
 This completes the proof.
\end{proof}
\begin{rem}
 The number of samples required within accuracy $\epsilon$ is atleast $\frac{B\ln\left(\frac{2}{\delta}\right)}{\epsilon^2}.$
\end{rem}
\section{Experimental Results}
In this section, we will discuss the comparison of $3-$quota banzhaf indices without association (WTA) and with the association matrix(WA). In the comparison analysis, the European Union voting system \cite{nice} is considered along with the following quota.
\begin{itemize}
\item[(a)] Weight quota ($74\%$ of voting weights).
\item[(b)] Population quota ( $62\%$ of population).
\item[(c)] Majority of the number of countries ($50\% + 1$).
\end{itemize}

For our analysis we consider the countries with weights more than equal to 7,i.e, $w \geq 7$. The weight-data and population data can be found in \footnote{http://www.consilium.europa.eu/en/council-eu/voting-system/qualified-majority/} \footnote{https://en.wikipedia.org/wiki/List\textunderscore of\textunderscore European\textunderscore Union\textunderscore member\textunderscore states\textunderscore by\textunderscore population}.
 The association matrix  $\Phi$ is taken based on the immigration  between the respective EU countries. We define the association matrix in the following manner.\\
 Let us denote $M_{i,j}$ be the number of people migrating from country $i$ to country $j$  and $M_{j,i}$ be the number of people migrating from country $j$ to country $i.$ Define $M=\displaystyle\max_{i \neq j}|M_{i,j}-M{j,i}|.$ Now the association matrix $\Phi_{i,j}$ defined as
$$\Phi_{i,j}=\left\{\begin{array}{ccc}
  1& ~~\mbox{ If $i=j.$}\\
  \frac{M_{j,i}-M_{i,j}}{M} &~~\mbox{ if $1\leq j<i\leq 18.$}\\
-\Phi_{j,i}&~~\mbox{if $1\leq i<j\leq 18.$}
\end{array}\right.$$
Using the above immigration association matrix, improved Banzhaf indices are computed and given in Table \ref{tab:Table1}.
\begin{table}[H]
	\centering
\caption{Without Association v/s with association(based on immigration)}
	\label{tab:Table1}
	\begin{tabular}{|c| c| c| c|c|c|}
		\hline
		Country & Weight & Popln & WTA & WA-immgr\\
		\hline
		Austria(AUT)        & 10 & 8.58 & 0.03549 & 0.05554 \\
		Belgium(BEL)        & 12 & 11.25 & 0.04403 & 0.06485 \\
		Czech Republic(CZE) & 12 & 10.53 & 0.04403 & 0.04681 \\
		Germany(DEU)        & 29 & 82.30 & 0.09560 & 0.12898 \\
		Denmark(DNK)        & 7 & 5.66   & 0.02629 & 0.03317 \\
		Spain(ESP)          & 27 & 46.46 & 0.08853 & 0.10302 \\
		Finland(FIN)        & 7 & 5.47   & 0.02629 & 0.03097 \\
		France(FRA)         & 29 & 66.99 & 0.09560 & 0.11263 \\
		Britain(GBR)        & 29 & 65.11 & 0.09560 & 0.12486 \\
		Greece(GRC)         & 12 & 10.81 & 0.04403 & 0.02750 \\
		Hungary(HUN)        & 12 & 9.85 & 0.04403 & 0.00396 \\
		Ireland(IRL)        & 7 & 4.63  & 0.02629 & 0.03236  \\
		Italy(ITA)          & 29 & 60.79 & 0.09560 & 0.06765 \\
		Netherlands(NLD)    & 13 & 17.10 & 0.04418 &0.06505  \\
		Poland(POL)         & 27 & 38.56 & 0.08853 & 0.0\\
		Portugal(PRT)       & 12 & 10.37 & 0.04403 & 0.03527\\
		Slovakia(SVK)       & 7 & 5.42   & 0.02629 & 0.01762 \\
		Sweden(SWE)         & 10 & 10.01 & 0.03549 &0.04968 \\
		All 18 Countries    & 291& 469.93& 1.0     &1.0\\		
		\hline
	\end{tabular}
	\end{table}
Now, we consider a random matrix with entries from $-1$ to $1$ as association matrix between EU countries. The Banzhaf indices with random association (WA-Random) are obtained by averaging the value of the banzhaf indices over 100 runs, where we take a new random association matrix in each run and the Banzhaf indices without association (WTA) are obtained in a single run. The computed indices are provided in Table \ref{tab:Table2}.
\begin{table}[H]
	\centering
\caption{Without Association v/s with random association}
	\label{tab:Table2}
	\begin{tabular}{|c| c| c| c|c|c|}
		\hline
		Country & Weight & Popln & WTA & WA-Random\\
		\hline
		Austria(AUT)        & 10 & 8.58 & 0.03549 & 0.04364\\
		Belgium(BEL)        & 12 & 11.25 & 0.04403 & 0.04509 \\
		Czech Republic(CZE) & 12 & 10.53 & 0.04403 & 0.04717 \\
		Germany(DEU)        & 29 & 82.30 & 0.09560 & 0.08706 \\
		Denmark(DNK)        & 7 & 5.66   & 0.02629 & 0.04134 \\
		Spain(ESP)          & 27 & 46.46 & 0.08853 &  0.07813\\
		Finland(FIN)        & 7 & 5.47   & 0.02629 & 0.04330 \\
		France(FRA)         & 29 & 66.99 & 0.09560 & 0.08860 \\
		Britain(GBR)        & 29 & 65.11 & 0.09560 & 0.08013\\
		Greece(GRC)         & 12 & 10.81 & 0.04403 & 0.04831 \\
		Hungary(HUN)        & 12 & 9.85 & 0.04403 & 0.04863 \\
		Ireland(IRL)        & 7 & 4.63  & 0.02629 & 0.03904 \\
		Italy(ITA)          & 29 & 60.79 & 0.09560 & 0.07439 \\
		Netherlands(NLD)    & 13 & 17.10 & 0.04418 & 0.04785\\
		Poland(POL)         & 27 & 38.56 & 0.08853 & 0.06762\\
		Portugal(PRT)       & 12 & 10.37 & 0.04403 & 0.04858\\
		Slovakia(SVK)       & 7 & 5.42   & 0.02629 & 0.03724 \\
		Sweden(SWE)         & 10 & 10.01 & 0.03549 & 0.03380\\
		All 18 Countries    & 291& 469.93& 1.0     & 1.0\\		
		\hline
	\end{tabular}
	\end{table}
The approximated Banzhaf indices without association (WTA-Approx) and with random association matrix (WA-Random-Approx) are computed by using  \textbf{Algorithm} \ref{algo:2} which discussed in Section 2.
Note that while computing the indices, we use a single random matrix and obtain the exact value of WA-Random. and for computing the approximated indices, we run the algorithm $20$ times  and then averaged. Similar computation followed for WTA-Approx. Also we consider $\epsilon = 0.01$ and $\delta=0.01$ for each of the approximation algorithm runs.
\begin{table}[H]
	\centering
\caption{Exact v/s Approximation}
	\label{tab:Table3}
	\begin{tabular}{|c| c| c| c|c|c|}
		\hline
		Country & WTA & WTA-Approx & WA-Random & WA-Random-Approx\\
		\hline
		AUT  & 0.03549 & 0.03540 & 0.06774 & 0.06757 \\
		BEL  & 0.04403 & 0.04428 & 0.07138 & 0.07150\\
		CZE  & 0.04403 & 0.04384 & 0.07138 & 0.07138\\
		DEU  & 0.09560 & 0.09557 & 0.09339 & 0.09353\\
		DNK  & 0.02629 & 0.02629 & 0.06305 & 0.06277\\
		ESP  & 0.08853 & 0.08844 &  0.0    & 0.0\\
		FIN  & 0.02629 & 0.02621 &  0.03484    & 0.03474\\
		FRA  & 0.09560 & 0.09579 &  0.02633 & 0.02653\\
		GBR  & 0.09560 & 0.09588 &  0.06871 & 0.06916\\
		GRC  & 0.04403 & 0.04392 &  0.00084 &  0.00083\\
		HUN  & 0.04403 & 0.04436 &  0.06967 & 0.06997\\
		IRL  & 0.02629 & 0.02624 &  0.06383 & 0.06371\\
		ITA  & 0.09560 & 0.09532 &  0.08334 & 0.08319\\
		NLD  & 0.04418 & 0.04429 &  0.01747 & 0.01740\\
		POL  & 0.08853 & 0.08871 &  0.07458 & 0.07480\\
		PRT  & 0.04403 & 0.04395 &  0.06468 & 0.06457\\
		SVK  & 0.02629 & 0.02617 &  0.06245 & 0.06228\\
		SWE  & 0.03549 & 0.03526 &  0.06623 & 0.06597\\	
		\hline
	\end{tabular}
	\end{table}
The result displayed in Table \ref{tab:Table3} justifies that the Banzhaf indices $\hat{\beta_{\Phi,i}}$ are obtained satisfies $P(|\hat{\beta_{\Phi,i}}-\beta_{\Phi,i}| > \epsilon) \leq \delta $.
\section{Conclusion}
We have an improved and generalized power index that captures the internal dynamic of players in real life situations that typically arise as examples of voting games.
The few main lines of future research directions that can be picked up from here are that of showing theoretical results regarding the index. Better approximations of the
index as in \cite{dubey} could be also interesting to investigate. In the direction of application one can look into ways of estimating the association matrix data
in real life, and the subsequent theoretical analysis of the effect of the association matrix on the index. In the third line we would like mention that with a restricted
domain of weights such as weights satisfying some conditions as in \cite{} is a very interesting future direction. A complete development of axiomatics of the improved
index as in \cite{burgin01, dubey} is also a theoretical problem.

\nocite{*}

\end{document}